\numberwithin{equation}{section}
 \theoremstyle{plain}
\newtheorem{thm}{Theorem}[section]
\newtheorem{theorem}[thm]{Theorem}
\newtheorem{lemma}[thm]{Lemma}
\newtheorem{corollary}[thm]{Corollary}
\newtheorem{proposition}[thm]{Proposition}
\theoremstyle{definition}
\newtheorem{remark}[thm]{Remark}
\newtheorem{definition}[thm]{Definition}
\newtheorem{example}[thm]{Example}
\newtheorem{defn-thm}[thm]{Definition-Theorem}
\newcommand{\Hom}{{ Hom}}
\newcommand{\btheorem}{\begin{theorem}}
\newcommand{\etheorem}{\end{theorem}}
\newcommand{\bproposition}{\begin{proposition}}
\newcommand{\eproposition}{\end{proposition}}
\newcommand{\bdefinition}{\begin{definition}}
\newcommand{\edefinition}{\end{definition}}
\newcommand{\bcorollary}{\begin{corollary}}
\newcommand{\ecorollary}{\end{corollary}}
\newcommand{\bproof}{\begin{proof}}
\newcommand{\eproof}{\end{proof}}
\newcommand{\bremark}{\begin{remark}}
\newcommand{\eremark}{\end{remark}}
\newcommand{\eexample}{\end{example}}
\newcommand{\bexample}{\begin{example}}
\newcommand{\elemma}{\end{lemma}}
\newcommand{\blemma}{\begin{lemma}}
\renewcommand{\bar}{\overline}
\renewcommand{\phi}{\varphi}
\newcommand{\ee}{\end{eqnarray*}}
\newcommand{\be}{\begin{eqnarray*}}
\newcommand{\beq}{\begin{equation}}
\newcommand{\eeq}{\end{equation}}
\newcommand{\bd}{\begin{enumerate} [(1)]}
\newcommand{\ed}{\end{enumerate}}
\renewcommand{\tilde}{\widetilde}
\renewcommand{\bf}{\textbf}
\renewcommand{\bf}{\textbf}
\renewcommand{\Hom}{\text{Hom}}
\title{Remarks on the Chern Classes of Calabi-Yau Moduli}
\author{Kefeng Liu, Changyong Yin}
\noindent \textsc{Center of Mathematical Sciences, Zhejiang University, Hangzhou, Zhejiang 310027, China;
        Department of Mathematics,University of California at Los Angeles,
        Los Angeles, CA 90095-1555, USA} \par
\noindent \textsc{Department of Mathematics,University of California at Los Angeles,
        Los Angeles, CA 90095-1555, USA} \par
\date{\today}
\begin{document}
\maketitle
\begin{abstract}
\noindent We prove that the first Chern form of the moduli space of polarized Calabi-Yau manifolds,
with the Hodge metric or the Weil-Petersson metric, represents the first Chern class of the canonical extensions
of the tangent bundle to the  compactification of the moduli space with normal crossing divisors.
\end{abstract}

\maketitle
\tableofcontents
\section{Introduction}

A compact projective manifold $X$ of complex dimension $n$ with $n
\geq 3$ is called a Calabi-Yau manifold in this paper, if it has a trivial
canonical bundle and satisfies $H^i(X, \mathcal{O}_X)=0$ for $0 < i < n$.
A polarized Calabi-Yau manifold is a pair $(X, L)$ consisting of a
Calabi-Yau manifold $X$, and an ample line bundle $L$ over $X$.
A basis of the quotient space
$(H_n(X,\mathbb{Z})/\text{Tor})/m(H_n(X,\mathbb{Z})/\text{Tor})$ is called a level $m$
structure with $m \geq 3$ on the polarized Calabi--Yau manifold.

\noindent We will consider the moduli space $\mathcal{M}_m$ of polarized Calabi-Yau manifolds with level $m$ structure with $m\geq 3$,
which is called the Calabi-Yau moduli in this paper for simplicity.
Over $\mathcal{M}_m$, we can
construct various Hodge bundles.
The holomorphic bundle $H^n$ over $\mathcal{M}_m$ whose fibers are the primitive
cohomology group $PH^n(X_p, \mathbb{C}), p \in \mathcal{M}_m$,
endowed with the Gauss-Manin connection, carry a polarized Hodge
structure of weight $n$. Then the holomorphic bundle $(H^n)^* \otimes H^n \rightarrow \mathcal{M}_m$
defines a variation of polarized Hodge structure over $\mathcal{M}_m$, which is defined over $\mathbb{Z}$.
Then, with the Hodge metric, we have the following useful observation,

\begin{theorem}\label{subbundle}
Let $\mathcal{M}_m$ be the moduli space of polarized Calabi-Yau manifolds with level $m$ structure with $m\geq 3$.
Then $(H^n)^* \otimes H^n$
defines a variation of polarized Hodge structure over $\mathcal{M}_m$, which is defined over $\mathbb{Z}$.
Moreover, with the natural Hodge metric over the Calabi-Yau moduli $\mathcal{M}_m$, the tangent bundle
\begin{eqnarray}
T \mathcal{M}_m \hookrightarrow (H^n)^* \otimes H^n,
\end{eqnarray}
is a holomorphic subbundle of $(H^n)^* \otimes H^n$ over $\mathcal{M}_m$ with the induced Hodge metric.
\end{theorem}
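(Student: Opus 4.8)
The plan is to deduce both assertions from the polarized variation of Hodge structure (VHS) carried by $H^n$, together with the differential of the period map. For the VHS-over-$\mathbb{Z}$ claim I would argue formally: each fiber $X_p$ being Calabi--Yau, the primitive cohomology $PH^n(X_p,\mathbb{C})$ carries a weight-$n$ Hodge structure polarized by the cup-product form $Q(\alpha,\beta)=\int_{X_p}\alpha\wedge\beta$, and as $p$ varies this gives the polarized VHS on $H^n$ with Hodge filtration $\{F^p\}$, flat Gauss--Manin connection $\nabla$, and integral lattice the local system $PH^n(X_p,\mathbb{Z})$. Dualizing reverses the weight to $-n$, and $(H^n)^*\otimes H^n$ then carries the induced weight-$0$ filtration, is polarized by the tensor of $Q$ with its dual, and has integral lattice $(PH^n(X_p,\mathbb{Z}))^*\otimes PH^n(X_p,\mathbb{Z})$; functoriality of Gauss--Manin under dual and tensor supplies the flat connection.

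For the subbundle statement I would first identify $(H^n)^*\otimes H^n\cong\End(H^n)$ and record its weight-$0$ Hodge decomposition $\End(H^n)=\bigoplus_k\End^{-k,k}$, with $\End^{-k,k}=\bigoplus_p\Hom(H^{p,q},H^{p-k,q+k})$ (here $q=n-p$). Griffiths transversality $\nabla F^p\subset F^{p-1}\otimes\Omega^1_{\mathcal{M}_m}$ means that passing to the associated graded of $\nabla$ produces an $\mathcal{O}$-linear, holomorphic map
\[
\tau:\ T\mathcal{M}_m\longrightarrow\End^{-1,1}(H^n)\subset\End(H^n),\qquad v\longmapsto\overline{\nabla_v}
\]
whose components are the induced maps $H^{p,q}\to H^{p-1,q+1}$; this $\tau$ is exactly the differential of the period map, holomorphic because $\nabla$ and the period map are. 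The crux is then to show $\tau$ is a fiberwise injection of constant rank $\dim\mathcal{M}_m$, so that its image is a holomorphic subbundle.

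This is where the Calabi--Yau hypothesis must be used decisively. Since $h^{n,0}=1$, the top component of $\tau$ is the map
\[
T_p\mathcal{M}_m\longrightarrow\Hom(H^{n,0},H^{n-1,1})\cong H^{n-1,1}(X_p),\qquad v\longmapsto\nabla_v\Omega\ \mathrm{mod}\ F^n
\]
where $\Omega$ spans $H^{n,0}$. I would identify this composite with the Kodaira--Spencer isomorphism $T_p\mathcal{M}_m\cong H^1(X_p,T_{X_p})$ (using Bogomolov--Tian--Todorov unobstructedness, which also gives smoothness of $\mathcal{M}_m$ of dimension $h^{n-1,1}$) followed by contraction with $\Omega$, the isomorphism $H^1(X_p,T_{X_p})\cong H^{n-1,1}(X_p)$. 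Because this composite is an isomorphism at every point $p$, the map $\tau$ is fiberwise injective of constant rank $h^{n-1,1}=\dim\mathcal{M}_m$, so a bundle morphism of constant rank whose image realizes $T\mathcal{M}_m$ as a holomorphic subbundle of $(H^n)^*\otimes H^n$.

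Finally I would check that the Hodge metric on $(H^n)^*\otimes H^n$, the one determined by $Q$ and the Hodge decomposition, restricts under $\tau$ to the natural Hodge metric on $\mathcal{M}_m$, which is essentially its definition, being induced from the Hodge bundles through the period map. I expect the main obstacle to be the third step: establishing fiberwise injectivity and constant rank of $\tau$, which genuinely relies on the Calabi--Yau-specific facts that $h^{n,0}=1$ and that deformations are unobstructed with Kodaira--Spencer an isomorphism. The VHS bookkeeping and the metric identification are then routine consequences of functoriality and the definitions.
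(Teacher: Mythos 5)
Your proposal is correct and follows essentially the same route as the paper: both obtain the variation of polarized Hodge structure on $(H^n)^*\otimes H^n$ by functoriality (dual and tensor of the weight-$n$ VHS on $H^n$), and both realize $T\mathcal{M}_m$ inside it via the differential of the period map, holomorphic by Griffiths transversality and fiberwise injective. The only difference is one of packaging: where the paper simply cites the local Torelli theorem for Calabi--Yau manifolds to get injectivity of $\Phi_*$, you inline its standard proof (using $h^{n,0}=1$, Bogomolov--Tian--Todorov unobstructedness, and the Kodaira--Spencer map followed by contraction with $\Omega$ giving an isomorphism onto $\Hom(H^{n,0},H^{n-1,1})$), which is a legitimate expansion of the same argument.
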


\noindent Then, by the important results for the integrability of Chern forms of
subbundles and quotient bundles of a variation of polarized Hodge structure over a quasi-projective manifold
as given by \cite{cks} and \cite{kollar}, see Theorem \ref{Chern forms 1} and Theorem \ref{Chern forms 2},
we can get that the first Chern form of the Calabi-Yau moduli
are integrable with the induced Hodge metric.
 More precisely, let $\widetilde{T \mathcal{M}}_m\longrightarrow \bar{\mathcal{M}}_m$ be the canonical extension of the tangent bundle of $\mathcal{M}_m$, then we have,
\begin{theorem}\label{hodgechern}
The first Chern form of the Calabi-Yau moduli $\mathcal{M}_m$ with the induced Hodge metric define currents over
 the compactification $\bar{\mathcal{M}}_m$ with normal crossing boundary divisors. Moreover,
 let $R_H$ be the curvature form of $T\mathcal{M}_m$ with the induced Hodge metric, then we have
 \begin{eqnarray*}
 \left( \frac{-1}{2 \pi i}\right)^N \int_{T\mathcal{M}_m} (tr R_H)^N= c_1(\tilde{T\mathcal{M}_m})^N
 \end{eqnarray*}
 where $N= \dim_{\mathbb{C}} \mathcal{M}_m$.
\end{theorem}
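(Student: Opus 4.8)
The plan is to deduce the statement from the integrability results quoted as Theorem \ref{Chern forms 1} and Theorem \ref{Chern forms 2}, together with Mumford's theory of \emph{good} singular metrics. By Theorem \ref{subbundle}, the tangent bundle $T\mathcal{M}_m$ sits as a holomorphic subbundle of the variation of polarized Hodge structure $(H^n)^*\otimes H^n$, which is defined over $\Z$, and it inherits the Hodge metric. Let $\tilde{T\mathcal{M}}_m\to\bar{\mathcal{M}}_m$ denote Deligne's canonical extension across the normal crossing boundary $D=\bar{\mathcal{M}}_m\setminus\mathcal{M}_m$, characterized by the requirement that the residues of the Gauss--Manin connection have eigenvalues in $[0,1)$. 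The first move is to record that $(\tr R_H)^N$ is a top degree $(N,N)$-form on $\mathcal{M}_m$, so the left-hand integral makes sense as an integral over $\mathcal{M}_m$, which differs from $\bar{\mathcal{M}}_m$ only by the measure-zero set $D$.

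First I would invoke the norm estimates of Cattani--Kaplan--Schmid \cite{cks}: near each boundary point, in local coordinates with $D=\{z_1\cdots z_k=0\}$, the Hodge norm of a flat frame of the canonical extension grows at worst like a power of $\sum_{j}\log|z_j|^{-1}$. These are precisely the bounds entering Mumford's notion of a good metric, and they are the analytic content behind Theorem \ref{Chern forms 1} and Theorem \ref{Chern forms 2} applied to the subbundle $T\mathcal{M}_m$. Consequently the curvature $R_H$ of the induced Hodge metric, and in particular $\tr R_H$, has Poincaré-type growth with only logarithmic singularities along $D$, so that the first Chern form $\frac{-1}{2\pi i}\tr R_H$ is locally integrable on $\bar{\mathcal{M}}_m$, defines a closed current there, and represents the cohomology class $c_1(\tilde{T\mathcal{M}}_m)$. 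This establishes the first assertion of the theorem.

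The remaining and most delicate point is the passage from the equality of classes $[\frac{-1}{2\pi i}\tr R_H]=c_1(\tilde{T\mathcal{M}}_m)$ to the equality of the top self-intersection numbers; a priori the $N$-fold wedge product of a singular closed current is not even defined. Here I would use the key structural feature of Mumford's good forms: forms of Poincaré growth constitute a subalgebra of the $L^1_{loc}$ forms which is closed under exterior product, the product of good forms is again good, and the map sending a good form to its current induces a \emph{ring} homomorphism to de Rham cohomology. Applying this to the first Chern form, the product $\bigl(\frac{-1}{2\pi i}\tr R_H\bigr)^N$ is itself a good top-degree form, it is absolutely integrable over $\mathcal{M}_m$, and its integral computes the cup product $c_1(\tilde{T\mathcal{M}}_m)^N$ paired with the fundamental class of $\bar{\mathcal{M}}_m$. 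This gives
\[
\left(\frac{-1}{2\pi i}\right)^N\int_{\mathcal{M}_m}(\tr R_H)^N = c_1(\tilde{T\mathcal{M}}_m)^N,
\]
as required.

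I expect the main obstacle to be exactly this multiplicativity step, namely verifying that the induced Hodge metric on the tangent \emph{subbundle} is good in Mumford's sense, so that its Chern forms lie in the good-forms algebra and the cohomology map stays multiplicative. Whereas Mumford's original argument addresses the full Hodge bundle, here the curvature of $T\mathcal{M}_m$ differs from that of the ambient bundle $(H^n)^*\otimes H^n$ by a term built from the second fundamental form of the holomorphic inclusion of Theorem \ref{subbundle}; one must check that this correction term also obeys the CKS growth bounds. Once the goodness of the subbundle metric is confirmed, both conclusions—that the Chern forms define currents on $\bar{\mathcal{M}}_m$ and that the intersection-number formula holds—follow formally from Mumford's framework.
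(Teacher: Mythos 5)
Your first step---realizing the theorem should follow from Proposition~\ref{subbundle} (the embedding $T\mathcal{M}_m \hookrightarrow (H^n)^*\otimes H^n$ as a holomorphic subbundle with the induced Hodge metric) together with the quoted degeneration results---is exactly the paper's strategy, and your treatment of the first assertion (currents on $\bar{\mathcal{M}}_m$, representation of $c_1(\widetilde{T\mathcal{M}}_m)$) matches the paper's appeal to the CKS estimates behind Theorems~\ref{Chern forms 1} and~\ref{Chern forms 2}. The divergence, and the genuine gap, is in the intersection-number step. The paper's proof is a direct citation: Theorem~\ref{Chern forms 2}, as stated, already applies to an \emph{arbitrary} vector subbundle $A$ of the variation of Hodge structure and already contains the equality $\left(\frac{-1}{2\pi i}\right)^n \int_S (\mathrm{tr}\, R_H)^n = c_1(\widetilde{A})^n$; applied to $A = T\mathcal{M}_m$, it yields the full statement of Theorem~\ref{hodgechern} with nothing left to verify except unipotency of the local monodromy, which the paper arranges by passing to a finite ramified cover. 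You instead set out to re-derive this clause from Mumford's good-metric formalism, and you yourself flag the load-bearing step as unverified: that the induced Hodge metric on the \emph{subbundle} $T\mathcal{M}_m$ is good in Mumford's sense. That is not a formality. Goodness requires growth control not only on the metric (which does restrict from the CKS norm estimates) but also on the connection form $h^{-1}\partial h$ of the induced metric, and for a subbundle this brings in the second fundamental form, which is not a priori bounded by the ambient estimates. Indeed, goodness is not inherited by subbundles in general---this is precisely why the subbundle case is stated as a separate theorem following the proof of Koll\'ar's result rather than as a corollary of CKS goodness of $F^p/F^q$. As written, your argument proves the theorem only conditionally on a claim you have not established, and establishing it would amount to reproving Theorem~\ref{Chern forms 2}.

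The repair is immediate in the context of this paper: drop the Mumford detour and quote Theorem~\ref{Chern forms 2} in full, since its second clause is verbatim the displayed equality; this is what the paper's proof does. Two smaller inaccuracies are worth fixing as well. First, your characterization of $\widetilde{T\mathcal{M}}_m$ via residues of the Gauss--Manin connection with eigenvalues in $[0,1)$ does not literally apply, because $T\mathcal{M}_m$ is not a flat subbundle; the canonical extension of a subbundle $A$ is defined in the paper as $\widetilde{A} := \widetilde{H}\cap j_*A$ inside Deligne's extension of the ambient flat bundle. Second, the quasi-unipotent-to-unipotent reduction (finite ramified cover, invoking Borel's theorem) should be made explicit, since both quoted theorems assume unipotent monodromy around the boundary divisor.
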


\noindent Another direct and easy consequence is that the Chern forms of the Hodge
bundles on the Calabi-Yau moduli with their induced Hodge metrics are all integrable.

\noindent In this paper we focus on Calabi-Yau manifolds.
Actually our method only needs the fact that the moduli space of the manifolds with certain
structures are smooth and quasi-projective and the period map is locally injective (the local Torelli theorem).
So our results can be easily extended to more general projective
 manifolds, including Calabi-Yau manifolds, Hyperk\"ahler manifolds, many hypersurfaces and complete intersections in projective spaces.
Here, we only summarize the results into the following theorem:

\begin{theorem}
Let  $\mathcal{M}$ be the moduli space of polarized projective manifolds
 with certain structure. Assume that $\mathcal{M}$ is smooth and quasi-projective.
If the period map from $\mathcal{M}$ into the period domain is locally injective, then the first Chern form of the moduli space $\mathcal{M}$
with the induced Hodge metric defines currents over the compactification $\bar{\mathcal{M}}$ with
normal crossing boundary divisors. Moreover, the first Chern form represents the first Chern class
of the corresponding canonical extension $\widetilde{T \mathcal{M}}\longrightarrow \bar{\mathcal{M}}$ of the tangent bundle.
\end{theorem}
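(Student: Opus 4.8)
\section*{Proof proposal}

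The plan is to reduce the general statement to the machinery already assembled for Calabi--Yau moduli, replacing the Calabi--Yau--specific identification of the tangent bundle with a single Hodge piece by the more flexible statement that the local Torelli theorem embeds $T\mathcal{M}$ as a holomorphic subbundle of an ambient variation of polarized Hodge structure. Concretely, let $H \rightarrow \mathcal{M}$ be the Hodge bundle associated to the primitive cohomology of the fibers, carrying a polarization and defined over $\mathbb{Z}$; then $\text{End}(H) = H^* \otimes H$ inherits a polarized variation of Hodge structure over $\mathbb{Z}$, exactly as in Theorem \ref{subbundle}. The quasi-projectivity and smoothness of $\mathcal{M}$ are recorded at this stage so that the compactification $\bar{\mathcal{M}}$ with normal crossing boundary divisors, and with it the canonical extension, are available.

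First I would produce the embedding. The differential of the period map is the Kodaira--Spencer / tangent map $T\mathcal{M} \rightarrow \text{End}(H)$, which by Griffiths transversality takes values in the $(-1,1)$ graded piece $\bigoplus_p \text{Hom}(H^{p,q}, H^{p-1,q+1})$. The hypothesis that the period map is locally injective is precisely the statement that this differential is everywhere injective, so it realizes $T\mathcal{M}$ as a holomorphic subbundle of $\text{End}(H)$. This is the general analogue of the inclusion in Theorem \ref{subbundle}; unlike the Calabi--Yau case it need not be a sub-variation of Hodge structure but only a holomorphic subbundle, which is all that is needed below.

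Next I would equip $T\mathcal{M}$ with the induced Hodge metric, namely the restriction of the Hodge metric of $\text{End}(H)$, and then invoke the integrability theorems of Cattani--Kaplan--Schmid and Koll\'ar (Theorem \ref{Chern forms 1} and Theorem \ref{Chern forms 2}) in their stated generality for holomorphic subbundles of a polarized variation of Hodge structure over a quasi-projective manifold. These theorems supply, in one package, that the Chern forms of $T\mathcal{M}$ with the induced Hodge metric extend to currents across the normal crossing boundary of $\bar{\mathcal{M}}$, and that the corresponding Chern classes coincide with those of the canonical (Deligne) extension $\widetilde{T\mathcal{M}} \rightarrow \bar{\mathcal{M}}$; specializing to degree one gives the asserted statement for the first Chern form, just as Theorem \ref{hodgechern} does in the Calabi--Yau setting. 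The quasi-unipotence of the monodromy, which makes the canonical extension well defined, is automatic for a variation of Hodge structure by the monodromy theorem.

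The main obstacle is the compatibility of the two a priori different notions of canonical extension: the holomorphic extension of $T\mathcal{M}$ produced by Schmid's nilpotent orbit theorem applied to the subbundle must be identified with Deligne's canonical extension $\widetilde{T\mathcal{M}}$ determined by the geometry of $\bar{\mathcal{M}}$. I would verify this by comparing the two extensions through their defining growth conditions: the sub-Hodge-bundle extension is characterized by the logarithmic growth of the Hodge norm encoded in the CKS norm estimates, and one checks that this matches the logarithmic growth governing $\widetilde{T\mathcal{M}}$, after which the degree-one Chern form identity follows from the integrability package without further computation. This verification is exactly where the local injectivity hypothesis is used, and it is the only place where the argument is not a formal transcription of the Calabi--Yau proof.
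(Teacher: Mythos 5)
Your proposal follows essentially the same route as the paper: use local Torelli together with Griffiths transversality to realize $T\mathcal{M}$ as a holomorphic subbundle of the variation of polarized Hodge structure $H^{*}\otimes H$, pass to a finite cover so the local monodromy around the normal crossing boundary becomes unipotent, and then apply Theorem \ref{Chern forms 1} and Theorem \ref{Chern forms 2} --- this is exactly the proof of Theorem \ref{hodgechern} transcribed to the general setting, which is all the paper intends by this statement. One small remark: the ``compatibility of two canonical extensions'' you flag as the main obstacle is a non-issue in the paper's framework, since the canonical extension of a subbundle $A\subset H$ is \emph{defined} there as $\widetilde{A}:=\widetilde{H}\cap j_{*}A$ and Koll\'ar's theorem is stated directly for that object, and the local injectivity hypothesis is used only to produce the subbundle embedding (as in your second paragraph), not in any extension comparison.
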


\noindent By similar argument, one can show that the Chern forms of the moduli space $\mathcal{M}$ with the Weil-Petersson metric
define currents over the compactification $\bar{\mathcal{M}}$ of the moduli space $\mathcal{M}$, and the first Chern form
 also represent the first Chern class of the
corresponding canonical extension of the tangent bundle.

\noindent This paper is organized as follows. In Section \ref{cs}, firstly, we review
the definition of the variation of Hodge structure. Then,
some essential estimates for the degeneration of
the Hodge metric of a variation of polarized Hodge structure near a normal crossing divisor was reviewed,
 which was used to derive the integrability of the Chern forms of subbundles
  and quotient bundles of the variation of polarized Hodge structure over a quasi-projective manifold.
  In Section \ref{Hodge}, we review the definition of moduli
space of polarized Calabi-Yau manifolds with level $m$ structure with $m \geq 3$ and various Hodge bundles.
Then, by a key observation
that the tangent bundle of Calabi-Yau moduli is a subbundle
of the variation of polarized Hodge structure $(H^n)^* \otimes H^n \rightarrow \mathcal{M}_m$,
we prove that the first Chern form of the Calabi-Yau moduli $\mathcal{M}_m$ are integrable, with the Hodge metric.
In Section \ref{weil-petersson}, the Weil-Petersson geometry was reviewed.
And, by the isomorphism $T \mathcal{M}_m \cong (F^n)^* \otimes F^{n-1}/ F^n$, we show that
the Chern forms of the Calabi-Yau moduli $\mathcal{M}_m$ are integrable, equipped with the Weil-Petersson metric.
\\
\\
\noindent \textbf{Acknowledgement}:
The Authors would like to thank Professors J\'anos Koll\'ar, Zhiqin Lu, Chin-Lung Wang and Kang Zuo
for their very helpful comments on an earlier version of this paper.

\section{Chern Forms of the Hodge Bundles}\label{cs}

In Section \ref{cs1}, we review
the definition of variations of Hodge structure. In Section \ref{cs2},
some essential estimates for the degeneration of
 Hodge metric of a variation of polarized Hodge structure near a normal crossing divisor was reviewed,
 which was used to derive the integrability of the Chern forms of subbundles
  and quotient bundles of a variation of polarized Hodge structure over a quasi-projective manifold.

\subsection{Variation of Hodge Structure and Period Map}\label{cs1}

Let $H_\mathbb{R}$ be a real vector space with a $\mathbb{Z}$-structure defined by a lattice $H_\mathbb{Z}\subset H_\mathbb{R}$, and let $H_\mathbb{C}$ be the complexification of $H_\mathbb{R}$. A Hodge structure of weight $n$ on $H_{\mathbb{C}}$ is a decomposition
\begin{equation*}
H_\mathbb{C}=\bigoplus\limits_{k=0}^n H^{k,n-k},\ \  \text{with}\ \ H^{n-k,k}=\bar{{H}^{k, n-k}}.
\end{equation*}
The integers $h^{k,n-k}=\dim_\mathbb{C} H^{k,n-k}$ are called the Hodge numbers. To each Hodge structure of weight $n$ on $H_{\mathbb{C}}$, one assigns the Hodge filtration:
\begin{equation}\label{fil}
H_\mathbb{C}=F^0\supset\cdots\supset F^n,
\end{equation}
with $F^k=H^{n,0}\oplus\cdots\oplus H^{k,n-k}$ and $f^k=\dim_\mathbb{C}F^k=\sum_{i=k}^nh^{i,n-i}$.
This filtration satisfies that
\begin{align}\label{filtration}
H_\mathbb{C}=F^k\oplus\bar{{F}^{n-k+1}},\ \ \text{for } 0\leq k\leq n.
\end{align}
Conversely, every decreasing filtration (\ref{fil}), with the property (\ref{filtration}) and fixed dimensions $\dim_{\mathbb{C}}F^k=f^k$,  determines a Hodge structure $\{H^{k, n-k}\}_{k=0}^n$, with
\begin{eqnarray*}
H^{k,n-k}=F^k\cap\bar{F^{n-k}}.
\end{eqnarray*}

\noindent A polarization for a Hodge structure of weight $n$ consists of the data of a Hodge-Riemann bilinear form $Q$ over $\mathbb{Z}$, which is symmetric for even $n$, skew symetric for odd $n$, such that
\begin{eqnarray}\label{cl150}
Q(H^{k,n-k},H^{r,n-r})&=0\ \ \text{unless}\ \ k=n-r,
\end{eqnarray}
\begin{eqnarray}\label{cl160}
i^{2k-n}Q(v,\bar{v})&>0\ \ \text{if}\ v\in H^{k, n-k},v\neq 0.
\end{eqnarray}

\noindent In terms of the Hodge filtration $H_\mathbb{C}= F^0  \supseteq F^1 \supseteq \cdots  \supseteq F^n$, the relations (\ref{cl150}) and (\ref{cl160}) can be written as
\begin{eqnarray}  \label{cl50}
Q\left ( F^k,F^{n-k+1}\right )=0,
\end{eqnarray}
\begin{eqnarray}  \label{cl60}
Q\left ( Cv,\bar v\right )>0 \ \ \text{if}\ \ v\ne 0,
\end{eqnarray}
where $C$ is the Weil operator given by $Cv=i^{2k-n}v$ when $v\in H^{k,n-k}=F^k\cap\bar{F^{n-k}}$.

\noindent

\begin{definition}
Let $S$ be a connected complex manifold, a variation of polarized
Hodge structure of weight $n$ over $S$ consists of a polarized local
system $H_{\mathbb{Z}}$ of $\mathbb{Z}$-modules and a filtration
of the associated holomorphic vector bundle $H$:
\begin{equation}
\cdots \supseteq F^{k-1}\supseteq F^k \supseteq \cdots
\end{equation}
by holomorphic subbundles $F^k$ which satisfy:\\
\textbf{1.} $H=F^k \oplus \overline{F^{n-k+1}}$
as $C^{\infty}$ bundles, where the conjugation is taking relative to the local
system of real vectorspace $H_{\mathbb{R}}:=H_{\mathbb{Z}}\otimes \mathbb{R}$.\\
\textbf{2.} $\nabla(F^k) \subseteq \Omega_S^1 \otimes F^{k-1}$, where $\nabla$ denotes the flat
connection on $H$.
\end{definition}

\noindent We will refer to the holomorphic subbundles $F^k$ as the Hodge bundles of the variation
of polarized Hodge structure. And for each
$s \in S$, we have the Hodge decomposition:
\begin{equation}
H_s =\bigoplus_{k=0}^n H_s^{k,n-k};~~~~H_s^{k,n-k}=\overline{H_s^{n-k,k}}
\end{equation}
where $H^{k,n-k}$ is the $C^{\infty}$ subbundle of $H$ defined by:
\begin{equation*}
H^{k,n-k}=F^k \cap \overline{F^{n-k}}.
\end{equation*}

\noindent Starting from a variation of polarized Hodge structure of weight $n$ on a complex manifold $S$,
fixing a point $s\in S$ as reference point, we can construct the period domain $D$ and its dual $\check D$.

\noindent The classifying space or the period domain $D$ for polarized Hodge structures
with Hodge numbers $\{h^{k, n-k}\}_{k=0}^n $ is the space of all such Hodge filtrations
\begin{equation*}
D=\left \{ F^n\subset\cdots\subset F^0=H_{\mathbb{C}}\mid %
\dim F^k=f^k, \eqref{cl50} \text{ and } \eqref{cl60} \text{ hold}
\right \}.
\end{equation*}
The compact dual $\check D$ of $D$ is
\begin{equation*}
\check D=\left \{ F^n\subset\cdots\subset F^0=H_{\mathbb{C}}\mid %
\dim F^k=f^k \text{ and } \eqref{cl50} \text{ hold} \right \}.
\end{equation*}
\noindent The classifying space or the period domain $D\subset \check D$ is an open subset.

\noindent Finally recall that a period map is given by a locally liftable, holomorphic mapping
\begin{eqnarray*}
\Phi: S \longrightarrow \Gamma \texttt{\char92 } D,
\end{eqnarray*}
where $D$ is the classifying space or the period domain for polarized Hodge structures with given Hodge numbers $h^{p,q}$,
$\Gamma$ is the monodromy group.

\subsection{Degeneration of Hodge Structures}\label{cs2}

In this section, we will consider a variation of polarized
Hodge structure over $S$, where $S$ is a quasi-projective manifold with $\mbox{dim}_{\mathbb{C}} S =k$.
Let $\bar{S}$ be its compactification such that $\bar{S} -S$ is a divisor of
normal crossings.

\noindent Let $(\mathcal{U}, s) \subset \bar{S} $ be a special coordinate neighborhood, i.e., a coordinate neighborhood
isomophic to the polycylinder $\Delta^k$ such that
\begin{equation*}
S \cap \mathcal{U} \cong \{s=(s_1, \cdots, s_l,\cdots, s_k)\in \Delta^k \mid \prod_{i=1}^l s_i \neq 0\}
=(\Delta^{*})^l \times \Delta^{k-l} \}.
\end{equation*}
where $\Delta, ~ \Delta^*$ are the unit disk and the punctured unit disk in the complex plane, respectively.
Consider the period map
\begin{eqnarray*}
\Phi: (\Delta^*)^l \times \Delta ^{k-l} \longrightarrow \Gamma \texttt{\char92 } D,
\end{eqnarray*}
where $\Gamma$ is the monodromy group. Let $U$ be the upper half plane of $\mathbb{C}$. Than
$U^l \times \Delta^{k-l}$ is the universal covering space of $(\Delta^*)^l \times \Delta^{k-l}$,
and we can lift $\Phi$ to a mapping
\begin{eqnarray*}
\tilde{\Phi} : U^l \times \Delta^{k-l} \longrightarrow  D.
\end{eqnarray*}

\noindent Let $(z_1, \cdots, z_l, s_{l+1}, \cdots, s_k)$ be the coordinates of $U^l \times \Delta^{k-l}$
such that $s_i = e^{2 \pi i z_i}$ for $1 \leq i \leq l$. Corresponding to each of the first $l$ variables,
we choose a monodromy transformation $\gamma_i \in \Gamma$, such that
\begin{equation*}
\widetilde{\Phi}(z_1,\cdots, z_i+1,\cdots z_l, s_{l+1}, \cdots, s_k)
=\gamma_i(\widetilde{\Phi}(z_1,\cdots, z_i,\cdots z_l, s_{l+1}, \cdots, s_k))
\end{equation*}
holds identically in all variables. And the monodromy transformations $\gamma_i$'s commute with each other.
By a theorem of Borel(see \cite{schmid}, Lemma 4.5 on p. 230), after passing to a finite cover if necessary,
 the monodromy transformation $\gamma_i$ around
the punctures $s_i=0$ is unipotent, i.e.,

\begin{eqnarray*}
\begin{cases} (\gamma_i-I)^{m-1}=0 \\ [\gamma_i, \gamma_j]=0,\end{cases}
\end{eqnarray*}
for some positive integer $m$.
Therefore, we can define the monodromy logarithm $N_i= \log \gamma_i$ by the Taylor's expansion
\begin{equation*}
N_i=log ~\gamma_i := \sum_{j\geq 1}(-1)^{j+1}\frac{(\gamma_i-1)^j}{j}, ~~~\forall  1 \leq i \leq l,
\end{equation*}
then $N_i, 1 \leq i \leq l$ are nilpotent.  Let $(v_{.})$ be a flat multivalued basis of $H$
over $\mathcal{U} \cap S$. The formula
\begin{equation*}
(\widetilde{v_{.}})(s):=\exp~(\frac{-1}{2\pi\sqrt{-1}}\sum_{i=1}^l \log ~s_i N_i)(v_{.})(s)
\end{equation*}

\noindent give us a single-valued basis of $H$. Deligne's canonical extension $\widetilde{H}$ of $H$ over $\mathcal{U}$ is generated by this basis $(\widetilde{v_{.}})$(cf. \cite{schmid}). And we have
\begin{proposition}\label{extension}
If the local monodromy is unipotent, then the cononical extension is a vector bundle, otherwise it is a coherent sheaf.
\end{proposition}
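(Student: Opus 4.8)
The plan is to work locally on a special coordinate polycylinder $\mathcal{U}\cong\Delta^k$ with $S\cap\mathcal{U}\cong(\Delta^*)^l\times\Delta^{k-l}$ and to analyze the single-valued frame $(\widetilde{v_\cdot})$ that by definition generates $\widetilde{H}$. The key input already available is Borel's theorem: after a finite cover each monodromy transformation $\gamma_i$ is unipotent, so that $N_i=\log\gamma_i$ is \emph{nilpotent} and given by a terminating series. First I would check that the sections $(\widetilde{v_\cdot})$ are genuinely single-valued on the punctured polycylinder. Traversing a loop around $s_i=0$ sends $z_i\mapsto z_i+1$, hence $\log s_i\mapsto\log s_i+2\pi\sqrt{-1}$ and $(v_\cdot)\mapsto\gamma_i(v_\cdot)=\exp(N_i)(v_\cdot)$; the correcting factor $\exp\!\big(\frac{-1}{2\pi\sqrt{-1}}\sum_i\log s_i\,N_i\big)$ then picks up exactly the compensating factor $\exp(-N_i)$, so the product is unchanged. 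Because the $N_i$ commute (Borel's theorem again), these compensations do not interfere across different $i$.

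Next, since each $N_i$ is nilpotent, the operator $\exp\!\big(\frac{-1}{2\pi\sqrt{-1}}\sum_i\log s_i\,N_i\big)$ is a finite sum, i.e.\ polynomial in the $\log s_i$ with nilpotent-endomorphism coefficients and invertible (unipotent) value at every point of $(\Delta^*)^l\times\Delta^{k-l}$. A short computation shows that in the frame $(\widetilde{v_\cdot})$ the flat connection takes the form $\nabla=d-\frac{1}{2\pi\sqrt{-1}}\sum_i N_i\,\frac{ds_i}{s_i}$, a logarithmic connection with constant nilpotent residues. I would then invoke Deligne's extension theorem: single-valued sections of a regular-singular connection of moderate growth extend holomorphically across $\{s_1\cdots s_l=0\}$, and since our frame is obtained from a flat frame by an invertible operator it stays pointwise linearly independent up to the boundary. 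Hence $(\widetilde{v_\cdot})$ is an honest holomorphic frame over all of $\Delta^k$, and the $\mathcal{O}_{\mathcal{U}}$-module $\widetilde{H}$ it generates is locally free, i.e.\ a vector bundle.

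For the general (non-unipotent) case I would first take the Jordan decomposition $\gamma_i=\gamma_i^{s}\gamma_i^{u}$ into semisimple and unipotent factors. By the monodromy theorem the eigenvalues of $\gamma_i^{s}$ are roots of unity, so after normalizing their arguments into a fixed fundamental domain one still produces finitely many single-valued generating sections by the same exponential correction. These generate $\widetilde{H}$ as a finitely generated $\mathcal{O}_{\mathcal{U}}$-module, which is therefore coherent by Oka's coherence theorem; the contribution $s_i^{\alpha_i}$ of the semisimple factor with $0<\alpha_i<1$ is precisely what can obstruct the existence of a holomorphic frame, so local freeness is special to the unipotent situation. The step I expect to be the main obstacle is the holomorphic extension across the divisor: flatness alone does not force the single-valued sections to remain holomorphic and non-degenerate at $s_i=0$, and one must use the regularity of the Gauss--Manin connection together with the moderate-growth estimates for the Hodge norm (Schmid's nilpotent orbit theorem, \cite{schmid}) to guarantee the boundedness that Riemann's removable singularity theorem needs; the non-unipotent eigenvalue bookkeeping is the secondary difficulty.
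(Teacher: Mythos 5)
You should first note that the paper itself gives no proof of Proposition \ref{extension}: it is stated as quoted background on Deligne's canonical extension, with \cite{schmid} as the implicit reference, so your proposal can only be measured against the standard argument. For the unipotent case your route is essentially that standard argument, and most of it is correct: the single-valuedness check under $z_i\mapsto z_i+1$, the use of commutativity of the $N_i$, and the computation that in the frame $(\widetilde{v_{.}})$ the flat connection becomes $d-\frac{1}{2\pi\sqrt{-1}}\sum_i N_i\,\frac{ds_i}{s_i}$, a logarithmic connection with nilpotent residues, are all right. However, you misidentify the ``main obstacle.'' There is no holomorphic-extension or removable-singularity issue to overcome: $\widetilde{H}$ is \emph{defined} as the $\mathcal{O}_{\mathcal{U}}$-module generated by $(\widetilde{v_{.}})$, i.e.\ this finite set of sections is declared to be a frame, so the sheaf is free over $\Delta^k$ as soon as you know (i) the sections are single-valued and (ii) they restrict to a pointwise basis of $H$ over $(\Delta^*)^l\times\Delta^{k-l}$, which holds because the correcting matrix $\exp\bigl(\frac{-1}{2\pi\sqrt{-1}}\sum_i\log s_i\,N_i\bigr)$ is unipotent, hence invertible, at every point where it is defined. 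Invoking ``Deligne's extension theorem'' to construct Deligne's extension is circular, and the moderate-growth/nilpotent-orbit estimates you anticipate needing enter this paper only later, for the metric statements behind Theorem \ref{Chern forms 1} and Theorem \ref{Chern forms 2} via the norm estimates of \cite{cks} --- not for local freeness. What your logarithmic-connection computation does buy, and the paper omits, is the intrinsic characterization of $\widetilde{H}$ (log poles with residue eigenvalues in a fixed fundamental domain), which is the cleanest way to see independence of the choice of coordinates and flat frame.

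Your non-unipotent half has a genuine gap on two counts. First, if you correct by a full logarithm $L_i$ of $\gamma_i$ with eigenvalues normalized to $2\pi\sqrt{-1}\,\alpha_i$, $\alpha_i\in[0,1)$, the corrected sections are still single-valued and pointwise independent off the divisor, so the sheaf they generate is again \emph{locally free}: your own construction cannot produce the coherent-but-not-locally-free phenomenon, since Deligne's extension with a fixed branch normalization is a vector bundle for arbitrary quasi-unipotent monodromy. The coherence-only fallback in the proposition pertains instead to objects obtained without such a choice, e.g.\ descent from the finite cover on which the monodromy becomes unipotent (invariant direct images), or intersections such as $\widetilde{A}=\widetilde{H}\cap j_{*}A$ for subbundles; this is why the paper simply passes to a finite ramified cover in Section \ref{Hodge3} to reduce to the unipotent case. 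Second, your coherence step ``finitely generated, hence coherent by Oka'' is incomplete: Oka's theorem gives coherence of $\mathcal{O}$, and finite generation of a subsheaf of an arbitrary $\mathcal{O}$-module (here a subsheaf of $j_{*}H$, which is not coherent) does not by itself yield coherence --- one must also control the relations, for instance by exhibiting the sheaf as the invariants of the pushforward of a locally free sheaf under a finite map, which is coherent by finiteness of the morphism.
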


\noindent The construction of $\widetilde{H}$ is independent of the choice of the local coordinates $s_i's$
and the flat multivalued basis $(v_{.})$. For any holomorphic subbundle $A$ of $H$, Deligne's canonical extension of $A$ is defined to be $\widetilde{A}:=\widetilde{H}\cap j_{*}A$ where $j: S \rightarrow \bar{S}$ is the inclusion map. Then we have the canonical extension of the Hodge filtration:

\begin{equation*}
\widetilde{H}=\widetilde{F}^0 \supset \widetilde{F}^1 \supset
\cdots \supset \widetilde{F}^n \supset 0,
\end{equation*}

\noindent which is also a filtration of locally free sheaves.

\noindent Let $N$ be a linear combination of $N_i, 1 \leq i \leq l$,  then $N$ defines a weight flat filtration $W_{\bullet}(N)$
of $H_{\mathbb{C}}$ (cf. ~\cite{deligne},~\cite{schmid}) by

\begin{equation*}
0 \subset \cdots W_{i-1}(N)\subset W_i(N) \subset W_{i+1}(N)\subset \cdots \subset H_{\mathbb{C}}.
\end{equation*}

\noindent Denote by $W_{\bullet}^j:=W_{\bullet}(\sum_{\alpha=1}^j N_{\alpha})$ for $j=1,\cdots,l$,  we can
choose a flat multigrading
\begin{equation*}
H_{\mathbb{C}}=\sum_{\beta_1,\cdots,\beta_l} H_{\beta_1,\cdots, \beta_l},
\end{equation*}

\noindent such that
\begin{equation*}
\bigcap_{j=1}^l W_{\beta_j}^j =\sum_{k_j\leq \beta_j} H_{k_1,\cdots,k_l}.
\end{equation*}

\noindent Let $h$ be the Hodge metric on the variation of polarized Hodge structure $H$. In the special neighborhood
$\mathcal{U}$, let $v$ be a nonzero local multivalued flat section of a multigrading component $H_{k_1,\cdots,k_l}$, then
$(\widetilde{v})(s):= \exp(\frac{-\sum_{i=1}^l \log s_{i} N_{i}}{2\pi \sqrt{-1}})v(s) $
is a local single-valued section of $\widetilde{H}$. And, there holds a norm estimate (Theorem $5.21$ in \cite{cks})
\begin{equation}
\parallel \widetilde{v}(s)\parallel_h \leq C_1(\frac{-\log \mid s_1\mid}{-\log \mid s_2 \mid})^{k_1/2}
(\frac{-\log \mid s_2\mid}{-\log \mid s_3 \mid})^{k_2/2} \cdots (-\log \mid s_l \mid )^{k_l/2},
\end{equation}

\noindent on the region
\begin{equation*}
\Xi(N_1,\cdots,N_l):=\{(s_1,\cdots, s_l,\cdots,s_k) \in (\Delta^{*})^l \times \Delta^{k-l}\mid
\mid s_1 \mid \leq \cdots \mid s_k \mid \leq \epsilon\}
\end{equation*}
for some small $\epsilon > 0$, where $C_1$ is a positive constant dependent on the ordering of
$\{N_1, N_2,\cdots, N_l\}$ and $\epsilon$. Since the number of ordering of $\{N_1, N_2,\cdots, N_l\}$
is finite, for any flat multivalued local section $v$ of $H$, there exist positive constants $C_2$ and $M_2$ such that
\begin{equation}
\parallel \widetilde{v}(s) \parallel_h \leq C_2(\prod_{i=1}^l -\log \mid s_{i}\mid)^{M_2},
\end{equation}
\noindent in the domain $\{(s_1,\cdots,s_l,\cdots, s_k) \mid 0< \mid s_i \mid < \epsilon ~(i=1,\cdots,l),
\mid s_j \mid < \epsilon~(j=l+1,\cdots, k)\}$.

\noindent Moreover, since the dual $H^{*}$ is also a variation of polarized Hodge structure, we then know
that, for any flat multivalued local section $v$ of $H$, there holds

\begin{equation}
C{'}(\prod_{i=1}^l -\log \mid s_{i}\mid)^{-M}\leq \parallel \widetilde{v}(s)\parallel_h
\leq C^{''}(\prod_{i=1}^l -\log \mid s_{i}\mid)^{M},
\end{equation}
where $C^{'}$ and $C^{''}$ both only depend on $\epsilon$.

\noindent By using this norm estimate, E. Cattani, A.  Kaplan and W. Schmid \cite{cks} get the following result
for the Chern forms of Hodge bundles over the quasi-projective manifold $S$, which is Corollary $5.23$ in \cite{cks}.

\begin{theorem}\label{Chern forms 1}
Let $S$ be a smooth variety, $\bar{S} \supset S$  be a smooth compactifiction
such that $\overline{S}-S=D_{\infty}$ is a normal crossing divisor. If $H$ is a variation of polarized Hodge structure over $S$
with unipotent monodromies around $D_{\infty}$,
then the Chern forms of Hodge metric on various Hodge bundles $F^p/F^q$ define currents on the compactification $\overline{S}$.
Moreover, the first Chern form represents the first Chern class of the
canonical extension $\widetilde{F^p/F^q} \longrightarrow \overline{S}$.
\end{theorem}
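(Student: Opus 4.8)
The plan is to localize the assertion near the boundary divisor $D_\infty$ and then to extract from the CKS norm estimates enough growth control on the Hodge metric, its connection and its curvature to conclude, by a standard comparison argument, both that the Chern forms are locally integrable and that the resulting currents carry the Chern classes of the canonical extension. Since both conclusions concern only the behaviour of the forms near $D_\infty$, I would fix a special coordinate polydisk $(\mathcal{U},s)\cong\Delta^k$ with $S\cap\mathcal{U}\cong(\Delta^*)^l\times\Delta^{k-l}$ as above, and work on $\mathcal{U}$. On $\widetilde H|_{\mathcal U}$ I would use the single-valued frame obtained by the $\exp$-twist of a flat multivalued basis adapted to the multigrading $H_{\mathbb C}=\sum_{\beta_1,\dots,\beta_l}H_{\beta_1,\dots,\beta_l}$, and induce from it a holomorphic frame of $\widetilde{F^p/F^q}$ (the quotient of the canonical extensions $\widetilde{F^p}$ and $\widetilde{F^q}$).

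The analytic heart is the two-sided norm estimate recalled above: for each frame section one has $C'(\prod_i -\log|s_i|)^{-M}\le \|\widetilde v(s)\|_h\le C''(\prod_i -\log|s_i|)^{M}$, so the entries of the metric matrix $h_{\alpha\bar\beta}=\langle \widetilde v_\alpha,\widetilde v_\beta\rangle_h$ and of its inverse are bounded above and below by products of powers of $-\log|s_i|$. This says precisely that $h$, viewed as a metric on $\widetilde{F^p/F^q}$, has at most logarithmic growth together with log-growth inverse. To run the argument I would upgrade this to control on derivatives: appealing to the same input from the several-variable $SL_2$-orbit theorem of \cite{cks} that underlies the norm estimate, the Chern connection form $\theta=h^{-1}\partial h$ and its curvature $\Theta=\bar\partial\theta$ have \emph{Poincaré growth}, i.e. their coefficients are dominated, up to constants and powers of $-\log|s_i|$, by the local complete Poincaré metric
\[
\omega_P=\sum_{i=1}^l \frac{\sqrt{-1}\,ds_i\wedge d\bar s_i}{|s_i|^2(\log|s_i|)^2}+\sum_{j>l}\sqrt{-1}\,ds_j\wedge d\bar s_j .
\]
In Mumford's terminology this is exactly the statement that $h$ is a \emph{good} metric on $\widetilde{F^p/F^q}$ over $\bar S$.

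Granting goodness, the two conclusions are formal. Each Chern form $c_j(F^p/F^q,h)$ is a universal polynomial in $\Theta$; by the Poincaré-growth bound its coefficients are $O(\omega_P^{\,j})$, and forms of Poincaré growth are integrable on the polydisk, so $c_j(F^p/F^q,h)$ is locally $L^1$ on $\bar S$ and defines a current by integration against test forms. For the cohomological identification I would fix any smooth Hermitian metric $h_0$ on the genuine bundle $\widetilde{F^p/F^q}$ over $\bar S$, whose Chern forms represent $c_j(\widetilde{F^p/F^q})$. Away from $D_\infty$ the difference $c_1(\cdot,h)-c_1(\cdot,h_0)$ is of the form $\frac{\sqrt{-1}}{2\pi}\,\partial\bar\partial u$ with $u=\log\det(h/h_0)$, hence exact, and the Poincaré-growth estimates on $\theta$ and $\Theta$ let one integrate by parts against a closed test form using a cutoff equal to $1$ off a shrinking tube around $D_\infty$; the boundary contributions vanish as the tube shrinks because the transgression forms have Poincaré growth while the tube volume decays fast enough. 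Thus the current $c_1(F^p/F^q,h)$ and the smooth form $c_1(\widetilde{F^p/F^q},h_0)$ represent the same class $c_1(\widetilde{F^p/F^q})$, which is the assertion.

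The main obstacle is the passage from the stated norm estimate, which bounds only the \emph{size} of sections, to the Poincaré-growth bounds on the connection $\theta$ and the curvature $\Theta$: these require differentiating the asymptotics of the Hodge metric and organizing the diagonal and off-diagonal contributions so that each acquires the decay $\tfrac{1}{|s_i|\log|s_i|}$ transverse to the divisor, which is where the full strength of the nilpotent- and $SL_2$-orbit theorems enters. Once these derivative estimates are in place, both the $L^1$-integrability and the vanishing of the boundary terms in the comparison become routine.
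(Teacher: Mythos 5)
Your proposal is correct in outline, but note that the paper itself offers no proof of this statement: it is quoted verbatim as Corollary~5.23 of \cite{cks}, with the norm estimates of Section~\ref{cs2} recalled only as background. What you have written is, in effect, a reconstruction of the actual argument in the literature, and it is the right one: the two-sided log-growth bounds on the twisted frame of $\widetilde{H}$ (hence on $h_{\alpha\bar\beta}$, its inverse, and $\det h$ via the dual variation), the upgrade to Poincar\'e growth of $h^{-1}\partial h$ and the curvature, the observation that this says the Hodge metric is \emph{good} in Mumford's sense on $\widetilde{F^p}/\widetilde{F^q}$, and then Mumford's formal machinery (local integrability of forms dominated by powers of $\omega_P$, plus the cutoff/transgression comparison with a smooth reference metric) to get both the current statement and the identification of $c_1$. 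Two remarks. First, the step you flag as the main obstacle --- differentiating the metric asymptotics to get Poincar\'e growth of the connection and curvature --- is genuinely the hard content of \cite{cks} (their estimate (5.22), resting on the several-variable $SL_2$-orbit theorem), and your proposal black-boxes it; since the paper black-boxes the entire theorem, your treatment is strictly more detailed than the source you are compared against, but a referee should be aware that this step cannot be recovered from the norm estimate (5.21) alone, exactly as you say. Second, it is worth noting why this route works here but not for Theorem~\ref{Chern forms 2}: goodness holds for the Hodge bundles $F^p/F^q$ with their Hodge metrics, whereas the metric induced on an arbitrary holomorphic subbundle of a variation of Hodge structure need not be good, which is why the subbundle case requires Koll\'ar's separate argument in \cite{kollar} and yields only the first Chern form. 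Your restriction of the class identification to $c_1$ while claiming integrability for all Chern forms matches the statement precisely. One small point to make explicit if you write this up: local freeness of $\widetilde{F^p}$ inside $\widetilde{H}$ (so that $\widetilde{F^p}/\widetilde{F^q}$ is a bundle and admits a smooth comparison metric $h_0$) is not automatic from the definition $\widetilde{F^p}=\widetilde{H}\cap j_*F^p$; it is part of Schmid's nilpotent orbit theorem \cite{schmid}.
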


\noindent Base on this result, the proof of  \cite[Theorem 5.1]{kollar} gives us the following result for
any subbundle of the variation of Hodge structure $H$.
\begin{thm}\label{Chern forms 2}
Let $S$ be a smooth variety, $\bar{S} \supset S $ be a smooth compactifiction
such that $\overline{S}-S=D_{\infty}$ is a normal crossing divisor. Let $H$ be a variation of polarized Hodge structure over $S$
with unipotent monodromies around $D_{\infty}$ and $A$ be a vector subbundle of $H$.
Then the first Chern form of $A$ with respect to the induced Hodge mtric is integrable. Moreover,
let $R_H$ be the curvature form with the induced Hodge metric over $A$, then we have
\begin{eqnarray*}
\left(\frac{-1}{2 \pi i} \right)^n \int_S (tr R_H)^n = c_1(\tilde{A})^n,
\end{eqnarray*}
where $n= \dim_{\mathbb{C}} S$.
\end{thm}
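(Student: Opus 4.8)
The plan is to reduce the statement, which concerns an arbitrary subbundle $A$, to a statement about a single line bundle, where the curvature is scalar and the norm estimates of Section \ref{cs2} apply directly. Write $r=\rk A$ and set $L:=\det A=\Lambda^r A$. Since the first Chern form depends only on the determinant of the metric, the trace $\tr R_H$ of the curvature of $A$ with the induced Hodge metric equals the curvature of the induced metric on $L$; equivalently $c_1(A,h)=c_1(L,\det h)$ as forms. Moreover $L$ is a holomorphic line subbundle of $\Lambda^r H$, and $\Lambda^r H$ is again a variation of polarized Hodge structure whose monodromies $\Lambda^r\gamma_i$ are unipotent because the $\gamma_i$ are. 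On the level of canonical extensions, determinant commutes with Deligne's extension for unipotent monodromy, so $\det\tilde A=\widetilde{\det A}=\tilde L$ inside $\widetilde{\Lambda^r H}$, whence $c_1(\tilde A)=c_1(\tilde L)$. Thus it suffices to prove the integrability and the intersection identity for the metrized line bundle $(L,\det h)$ and its canonical extension $\tilde L$.

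Next I would install the boundary growth of the induced metric on $L$. Fixing a special coordinate neighborhood $(\mathcal U,s)\cong\Delta^k$ meeting $D_\infty$ and a local holomorphic frame $e$ of $\tilde L$, and writing $e$ as a wedge of holomorphic sections of $\tilde A$ expressed through the single-valued generators of $\widetilde{\Lambda^r H}$, the two-sided norm estimate recalled in Section \ref{cs2} yields constants with
\[
C'\Big(\prod_{i=1}^l-\log|s_i|\Big)^{-M}\le \|e(s)\|_h\le C''\Big(\prod_{i=1}^l-\log|s_i|\Big)^{M}.
\]
Consequently the potential $\psi:=\log\|e(s)\|_h^2$ has log-log growth, $|\psi|=O\big(\log(\textstyle\sum_i-\log|s_i|)\big)$, and together with the accompanying estimates on $\partial\psi$ and $\partial\bar\partial\psi$ this says precisely that the induced metric on $L$ is a good singular metric along $D_\infty$. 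This is the same input that makes Theorem \ref{Chern forms 1} work for the Hodge bundles $F^p/F^q$; the only new point is to verify that a frame of the non-Hodge line bundle $\tilde L=\det\tilde A$ still obeys these bounds, which the estimates above provide.

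With goodness in hand, integrability follows at once: the first Chern form $c_1(L,\det h)=-\tfrac{i}{2\pi}\partial\bar\partial\psi$ has Poincaré-type growth, and since the model singularity $\frac{ds\wedge d\bar s}{|s|^2(\log|s|)^2}$ is locally integrable near $s=0$, the form is $L^1$ on $\bar S$ and defines a current. That this current represents $c_1(\tilde L)$ is again the good-metric statement, obtained exactly as in the proof of \cite[Theorem 5.1]{kollar}. Unwinding $L=\det A$ then proves the first assertion of the theorem.

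The remaining and hardest point is the numerical identity $\big(\tfrac{-1}{2\pi i}\big)^n\int_S(\tr R_H)^n=c_1(\tilde L)^n$. Here I would work on the truncated manifold $S_\epsilon:=\{\prod_i|s_i|\ge\epsilon\}$, represent $c_1(\tilde L)$ on $\bar S$ by a smooth form differing from $c_1(L,\det h)$ by $\partial\bar\partial$ of the global function comparing $\|e\|_h$ with a smooth extension metric, and apply Stokes to $(c_1(L,\det h))^n$ on $S_\epsilon$. The difference between $\int_{S_\epsilon}c_1(L,\det h)^n$ and the topological number $c_1(\tilde L)^n$ is then a sum of boundary integrals over the tubes $\{|s_i|=\epsilon\}$, of the schematic shape $\int_{\text{tube}}\psi\,(\partial\bar\partial\psi)^{n-1}$ and its analogues. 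The main obstacle is to show that every such boundary term tends to $0$ as $\epsilon\to0$: this is precisely where the log-log growth of $\psi$ and the Poincaré growth of $\partial\psi$ and $\partial\bar\partial\psi$ are indispensable, since they force each tube integral to be $O\big(1/\log(1/\epsilon)\big)$ or smaller. Establishing this vanishing, and identifying the surviving term with the self-intersection of $\tilde L$ on $\bar S$, is the technical core, and it is carried out by the same boundary-estimate argument that underlies \cite{cks} and the proof of \cite[Theorem 5.1]{kollar}.
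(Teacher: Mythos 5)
Your proposal takes essentially the same route as the paper: the paper proves this statement purely by combining the CKS norm estimates of Section \ref{cs2} (Theorem \ref{Chern forms 1}, i.e.\ \cite[Cor.~5.23]{cks}) with the argument of \cite[Theorem 5.1]{kollar}, which is exactly the strategy you reconstruct --- reduction to the determinant line bundle $\det A\subset \Lambda^r H$, two-sided logarithmic bounds on frames of the canonical extension, and a truncation/Stokes argument with vanishing boundary terms. The two steps you assert without derivation (that $\det$ commutes with the canonical extension, valid in codimension one by saturation of $\tilde A=\tilde H\cap j_*A$, which suffices for $c_1$; and the Poincar\'e-growth bounds on $\partial\psi$ and $\partial\bar\partial\psi$) are precisely the ingredients supplied by the cited proofs, so the proposal is correct and consistent with the paper.
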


\section{Chern Forms of Calabi-Yau Moduli with the Hodge Metric}\label{Hodge}

In Section \ref{Hodge1}, we review the definition of the moduli
space of a polarized Calabi-Yau manifold with level $m$ structure with $m \geq 3$
and various Hodge bundles over the moduli space.
In Section \ref{Hodge2} and \ref{Hodge3},  by a key observation
that the tangent bundle of the Calabi-Yau moduli is a subbundle
of the variation of polarized Hodge structure $(H^n)^* \otimes H^n \rightarrow \mathcal{M}_m$,
we get that the first Chern form of the Calabi-Yau moduli $\mathcal{M}_m$ are integrable with the Hodge metric.

\subsection{Calabi-Yau Moduli and Hodge Bundles}\label{Hodge1}

In this section, we briefly review the construction of the moduli space
of polarized Calabi-Yau manifolds with leve $m$ structure with $m \geq 3$ and its basic properties.
For the concept of Kuranishi family of compact complex manifolds,
we refer to \cite[Pages $8$-$10$]{SU}, \cite[Page 94]{P} or \cite[Page 19]{viehweg}
for equivalent definitions and more details.
If a complex analytic family
$\pi: \mathcal{X}\rightarrow S$ of compact complex manifolds is complete at each point of $S$ and versal at the point
$0 \in S$, then the family $\pi: \mathcal{X}\rightarrow S$ is called a Kuranishi family of the complex maniflod
$X=\pi^{-1}(0)$. The base space $S$ is called the Kuranishi space. If the family is complete at each point of a
neighbourhood of $0 \in S$ and versal at $0$, then this family is called a local Kuranishi family at
$0 \in S$. In particular, by definition, if the family is versal at each point of $S$, then it is local Kuranishi
at each point of the base $S$.

\noindent
A basis of
the quotient space
$(H_n(X,\mathbb{Z})/\text{Tor})/m(H_n(X,\mathbb{Z})/\text{Tor})$ is called a level $m$
structure on the polarized Calabi--Yau manifold $(X, L)$, where we always assume $m\geq 3$.
For deformations of
polarized Calabi-Yau manifolds with level $m$ structure, we have the following theorem, which is a reformulation
of \cite[Theorem 2.2]{S}. One can also look at \cite{P} and
\cite{viehweg} for more details about the construction of the moduli space of Calabi-Yau manifolds.

\begin{thm}\label{quasiproj}
Let $(X, L)$ be a polarized Calabi-Yau manifold with level $m$ structure with $m \geq 3$, then
there exists a quasi-projective complex manifold $\mathcal{M}_m$ with a versal family of Calabi-Yau
maniflods,

\begin{equation}\label{m-family}
\mathcal{X}_{\mathcal{M}_m} \longrightarrow \mathcal{M}_m,
\end{equation}
containing $X$ as a fiber, and polarized by an ample line bundle $\mathcal{L}_{\mathcal{M}_m}$ on the
versal family $\mathcal{X}_{\mathcal{M}_m}$.
\end{thm}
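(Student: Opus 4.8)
The plan is to construct $\mathcal{M}_m$ as a geometric quotient of a locally closed subscheme of a Hilbert scheme by a projective linear group, using the level $m$ structure with $m \geq 3$ to rigidify the problem so that this quotient is an honest smooth quasi-projective manifold rather than merely a Deligne--Mumford stack. I would proceed in four steps: embedding into projective space, forming the parameter scheme inside a Hilbert scheme, rigidifying and taking the GIT quotient, and finally checking smoothness and constructing the family.

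First I would fix the Hilbert polynomial $P$ determined by $(X,L)$ and pass to a sufficiently divisible power $L^{\otimes \nu}$ that is very ample with all higher cohomology vanishing, uniformly across the deformation class; boundedness of polarized Calabi--Yau manifolds with fixed $P$ (after Matsusaka) guarantees that a single $\nu$ works. Since $K_X$ is trivial, Riemann--Roch makes $N+1 = \dim H^0(X, L^{\otimes\nu})$ independent of the member of the class, so every such $(X,L)$ embeds into a fixed $\mathbb{P}^N$, and the images sweep out a locally closed subscheme $\mathcal{H} \subset \mathrm{Hilb}_P(\mathbb{P}^N)$ consisting of smooth Calabi--Yau manifolds carrying the prescribed polarization. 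The group $G = \PGL(N+1)$ acts on $\mathcal{H}$, and two points of $\mathcal{H}$ parametrize isomorphic polarized manifolds precisely when they lie in a common $G$-orbit, so set-theoretically $\mathcal{M}_m$ is the orbit space of the sublocus $\mathcal{H}_m$ carrying compatible level $m$ data.

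The decisive point is that the level $m$ structure with $m \geq 3$ kills automorphisms: any automorphism of $(X,L)$ acting trivially on $(H_n(X,\mathbb{Z})/\text{Tor})/m$ must be the identity, so the stabilizer of every point of $\mathcal{H}_m$ under $G$ is trivial. Combined with the asymptotic stability of manifolds with semiample (here trivial) canonical bundle in the sense of Viehweg \cite{viehweg}, every point of $\mathcal{H}_m$ is GIT-stable for a suitable linearization of the $G$-action, and one obtains a geometric quotient $\mathcal{M}_m = \mathcal{H}_m / G$ that is quasi-projective. The universal family over $\mathrm{Hilb}_P(\mathbb{P}^N)$ restricts to $\mathcal{H}_m$, and because the $G$-action there is free with geometric quotient $\mathcal{M}_m$, this family descends to the family (\ref{m-family}), with $\mathcal{L}_{\mathcal{M}_m}$ descending from $\mathcal{O}_{\mathbb{P}^N}(1)$. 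Smoothness of $\mathcal{M}_m$ then follows from the Bogomolov--Tian--Todorov theorem: deformations of a Calabi--Yau manifold are unobstructed, so each Kuranishi space is smooth of dimension $\dim H^1(X, T_X)$, and since the level structure trivializes automorphisms, $\mathcal{M}_m$ is locally isomorphic to this Kuranishi space; this simultaneously exhibits (\ref{m-family}) as versal at each point.

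I expect the main obstacle to be the quasi-projectivity of the quotient. Producing a geometric quotient that is an actual quasi-projective scheme --- rather than an algebraic space or a stack --- requires establishing GIT-stability uniformly over the whole parameter locus, which rests on Viehweg's positivity theorems for determinants of pushforwards of powers of the relative dualizing sheaf and is the technical heart of the construction. The secondary delicate point is the rigidification itself: proving that a level $m$ structure with $m \geq 3$ genuinely eliminates all nontrivial automorphisms (the analogue of Serre's lemma for abelian varieties) is what upgrades the moduli problem from a stack to a scheme and must be verified before the free quotient can be formed.
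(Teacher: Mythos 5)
Your proposal cannot be compared line-by-line with the paper's own argument for a simple reason: the paper does not prove Theorem \ref{quasiproj} at all. It is quoted as a reformulation of \cite[Theorem 2.2]{S}, with the actual construction referred to \cite{P} and \cite{viehweg}. That said, your architecture --- Matsusaka boundedness, a fixed $\nu$ and embedding into $\mathbb{P}^N$, a locally closed locus $\mathcal{H}_m$ in the Hilbert scheme, a geometric $\PGL(N+1)$-quotient made quasi-projective via Viehweg's positivity results, and smoothness via Bogomolov--Tian--Todorov --- is exactly the route of the cited references, so the skeleton is right. But there is a genuine gap at precisely the step you single out as "the decisive point."

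You claim that any automorphism of $(X,L)$ acting trivially on $(H_n(X,\mathbb{Z})/\text{Tor})/m(H_n(X,\mathbb{Z})/\text{Tor})$ with $m\geq 3$ must be the identity, and you build freeness of the $G$-action, descent of the universal family, and smoothness on this claim. Serre's lemma (applicable since $H^0(X,T_X)=0$ makes $\mathrm{Aut}(X,L)$ finite) yields only that such an automorphism acts trivially on the lattice $H_n(X,\mathbb{Z})/\text{Tor}$; to conclude it is the identity of $X$ you would need the representation $\mathrm{Aut}(X,L)\rightarrow \mathrm{Aut}(H_n(X,\mathbb{Z})/\text{Tor})$ to be faithful, and for Calabi--Yau manifolds this fails in general: Szendr\H{o}i himself analyzed the Aspinwall--Morrison Calabi--Yau threefold, which carries a nontrivial automorphism acting trivially on the middle cohomology. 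So the action on $\mathcal{H}_m$ need not be free, the universal family need not descend, and your argument would prove the existence of a \emph{universal} family --- more than the theorem asserts and more than is true; this is exactly why the statement (following \cite[Theorem 2.2]{S}) claims only a \emph{versal} family. The correct repair, which is how the cited construction proceeds, is to note that by Serre's lemma the stabilizer of any point of $\mathcal{H}_m$ acts trivially on middle cohomology, hence by local Torelli (injectivity of the differential of the period map) trivially on the Kuranishi base; therefore the geometric quotient is still a smooth quasi-projective variety locally isomorphic to the Kuranishi space, and the resulting family is versal at each point even where it fails to be universal. Your smoothness argument should be routed through this triviality of the stabilizer action on the base, not through triviality of the stabilizer itself.
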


\noindent Let us define $\mathcal{T}_L(X)$ to  be the universal
cover of the base space $\mathcal{M}_m$ of the versal family above,
\begin{equation*}
\pi: \mathcal{T}_L(X) \longrightarrow \mathcal{M}_m
\end{equation*}
and the family
\begin{equation*}
\mathcal{U} \longrightarrow \mathcal{T}_L(X)
\end{equation*}
to be the pull-back of the family (\ref{m-family}) by the projection $\pi$, which can be considered as a family of polarized and marked Calabi-Yau manifolds. Recall that a marking on a Calabi-Yau manifold is  given by an integral basis of $H_n(X,\mathbb{Z})/\text{Tor}$. For simplicity, we will denote
$\mathcal{T}_L(X)$ by $\mathcal{T}$, which has the following property:

\bproposition\label{simplyconnected}
The Teichm\"uller space $\mathcal{T}$ is a simply connected smooth complex manifold, and the family
\begin{equation}
\mathcal{U} \longrightarrow \mathcal{T}
\end{equation}
containing $X$ as a fiber, is local Kuranishi at each point of the Teichm\"uller space $\mathcal{T}$.
\eproposition

\noindent Note that the Teichm$\ddot{u}$ller space $\mathcal{T}$ does not depend on the choice of level $m$.
In fact, let $m_1, m_2$ be two different positive integers, $\mathcal{U}_1 \rightarrow \mathcal{T}_1$ and $\mathcal{U}_2
\rightarrow \mathcal{T}_2$ are two versal families constructed via level $m_1$ and level $m_2$ respectively as
above, both of which contain $X$ as a fiber. By using the fact that $\mathcal{T}_1$ and $\mathcal{T}_2$ are
simply connected and the definition of versal families, we have a biholomorphic map $f: \mathcal{T}_1
\rightarrow \mathcal{T}_2$, such that the versal family $\mathcal{U}_1 \rightarrow \mathcal{T}_1$ is the pull-
back of the versal family $\mathcal{U}_2 \rightarrow \mathcal{T}_2$ by the map $f$. Thus these two families are
isomorphic to each other.

\noindent In this paper, we call $\mathcal{M}_m$ with $m\geq 3$ the Calabi-Yau moduli for simplicity.
Given any point $p \in \mathcal{M}_m$, the corresponding fiber $X_p$ in the versal family $\mathcal{U} \rightarrow \mathcal{T}$
is a polarized Calabi-Yau manifold $(X_p, L_p)$.
Hence, the flat holomorphic bundle $H^n$ over $\mathcal{M}_m$ whose fibers are the primitive
cohomology group $PH^n(X_p), p \in \mathcal{M}_m$,
endowed with the Gauss-Manin connection, carries a polarized Hodge
structure of weight $n$.

\noindent The flat bundle $H^n$ contains a flat real subbundle
$H^n_{\mathbb{R}}$, whose fiber corresponds to the subspaces $PH^n(X_p, \mathbb{R}) \subset PH^n(X_p)$;
and $H^n_{\mathbb{R}}$, in turn, contains a flat lattice bundle $H^n_{\mathbb{Z}}$, whose fibers are the
images of $PH^n(X_p, \mathbb{Z})$ in $PH^n(X_p, \mathbb{R})$.
Moreover, there exist $C^{\infty}$ subbundles $H^{p,q} \subset H^n$ with  $p+q=n$,
whose fibers over $p \in \mathcal{M}_m$ are $PH^{p,q}(X_p)$. For $0 \leq k \leq n$, $F^k = \oplus_{i \geq k} H^{i, n-i}$
are then holomorphic subbundles of $H^n$. Thus
the bundle $H^n$ defines a variation of polarized Hodge structure over $\mathcal{M}_m$, which is defined over $\mathbb{Z}$.
Thus, by the functorial construction of variation of polarized Hodge structure,
the holomorphic bundle $(H^n)^* \otimes H^n \rightarrow \mathcal{M}_m$
defines a variation of polarized Hodge structure over $\mathcal{M}_m$, which is defined over $\mathbb{Z}$.

\subsection{Period Map and the Hodge Metric on Calabi-Yau Moduli}\label{section period map} \label{Hodge2}
For any point $p\in  \mathcal{T}$, let $(X_p, L_p)$ be the corresponding fiber in
the versal family $ \mathcal{U} \rightarrow \mathcal{T}$, which is a polarized
Calabi--Yau manifold. A canonical identification of the middle dimensional
cohomology of $X_p$ to that of the background manifold $M$, that is,
$H^n(M)\simeq H^n(X_p)$ can be used to identify $H^n(X_p)$
for all fibers over $\mathcal{T}$. Thus we get a canonically trivial
bundle $H^n(M)\times \mathcal{T}$.
The period map from $\mathcal{T}$ to $D$
 is defined by assigning to each point $p\in\mathcal{T}$ the Hodge structure on $X_p$, that is
\begin{align*}
\Phi_{\mathcal{T}}:\, \mathcal{T} \rightarrow D, \quad\quad p\mapsto\Phi(p)=\{F^n(X_p)\subset\cdots\subset F^0(X_p)\}
\end{align*}
For the Calabi-Yau moduli $\mathcal{M}_m$,
we have the following period map:
\begin{equation}
\Phi: \mathcal{M}_m \longrightarrow  D/\Gamma,
\end{equation}
where $\Gamma$ denotes the global monodromy group which acts
properly and discontinuously on the period domain $D$. By going to finite covers of $\mathcal{M}_m$ and $D/\Gamma$, we may also assume $ D/\Gamma$ is smooth without loss of generality.

\noindent In \cite{griffiths 2}, Griffiths and Schmid studied the {Hodge metric} on the period domain $D$ which is the natural homogeneous metric on $D$. We denote it by $h$. In particular, this Hodge metric is a complete homogeneous metric.
By local Torelli theorem for Calabi--Yau manifolds, we know that $\Phi_{\mathcal{T}}, \Phi$ are both locally injective.
Thus it follows from \cite{griffiths 2} that the pull-backs of $h$ by $\Phi_{\mathcal{T}}$ and $\Phi$ on $\mathcal{T}$
and $\mathcal{M}_m$ respectively are both well-defined K\"ahler metrics. By abuse of notation, we still call these
 pull-back metrics the Hodge metrics.
 For explicit formula of the Hodge metric over moduli space of polarized Calabi-Yau manifolds, especially for
threefolds, the reader can refer to \cite{L99}, \cite{L011} and \cite{L012} for details.

\noindent The period map has several good properties, and one may refer to
Chapter~10 in \cite{Voisin} for details. Among them, one of the most
important is the following Griffiths transversality:
the period map $\Phi$ is a holomorphic map and its tangent map satisfies that
\begin{equation*}
\Phi_*(v)\in \bigoplus_{k=1}^{n}
\text{Hom}\left(F^k_p/F^{k+1}_p,F^{k-1}_p/F^{k}_p\right)\quad\text{for any}\quad p\in\mathcal{T}\ \
\text{and}\ \ v\in T_p^{1,0}\mathcal{T}
\end{equation*}
with $F^{n+1}=0$, or equivalently,
$\Phi_*(v)\in \bigoplus_{k=0}^{n} \text{Hom} (F^k_p,F^{k-1}_p).$
And, by the local Toreli theorem for
Calabi-Yau manifolds, the map $\Phi_{*}$ is injective. So we have
\begin{proposition}\label{subbundle}
Let $\mathcal{M}_m$ be the moduli space of polarized Calabi-Yau manifolds with level $m$ structure with $m \geq 3$.
Then $(H^n)^* \otimes H^n$
defines a variation of polarized Hodge structure over $\mathcal{M}_m$, which is defined over $\mathbb{Z}$.
Moreover, with the induced Hodge metric over the Calabi-Yau moduli $\mathcal{M}_m$, the tangent bundle
\begin{eqnarray}
T \mathcal{M}_m \hookrightarrow (H^n)^* \otimes H^n,
\end{eqnarray}
is a holomorphic subbundle of $(H^n)^* \otimes H^n$ over $\mathcal{M}_m$ with the induced Hodge metric.
\end{proposition}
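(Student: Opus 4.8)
The first assertion merely records the functorial construction recalled above: since $H^n$ is a polarized variation of Hodge structure of weight $n$ defined over $\mathbb{Z}$, its dual $(H^n)^*$ is a polarized variation of Hodge structure of weight $-n$ over $\mathbb{Z}$, and the tensor product $(H^n)^*\otimes H^n=\End(H^n)$ inherits a polarized variation of Hodge structure of weight $0$ over $\mathbb{Z}$. Its Hodge decomposition is $\End(H^n)=\bigoplus_r\End^{r,-r}$, where $\End^{r,-r}$ consists of the endomorphisms carrying $H^{k,n-k}$ into $H^{k+r,\,n-k-r}$ for every $k$, and the associated Hodge filtration is $F^p\End(H^n)=\{\phi:\phi(F^k)\subset F^{k+p}\ \text{for all }k\}$. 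The content of the proposition is therefore the second assertion, and the plan is to realize the embedding through the differential of the period map, that is, through the Kodaira--Spencer/cup-product map.

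Fix $p\in\mathcal{M}_m$. The differential of the holomorphic period map sends a tangent vector $v\in T_p\mathcal{M}_m$ to the endomorphism $\Phi_*(v)\in\End(H^n_p)$ acting by cup product with the Kodaira--Spencer class of $v$; Griffiths transversality, $\Phi_*(v)\in\bigoplus_k\Hom(F^k_p/F^{k+1}_p,\,F^{k-1}_p/F^k_p)$, shows that $\Phi_*(v)$ lowers the Hodge filtration by exactly one step, so that its image lies in the graded piece $\End^{-1,1}$, equivalently in $F^{-1}\End(H^n)/F^0\End(H^n)$. For Calabi--Yau manifolds $h^{n,0}=1$, hence $F^n$ is a line bundle and the map restricts, on the top graded piece, to the map $T_p\mathcal{M}_m\xrightarrow{\sim}\Hom(F^n_p,\,F^{n-1}_p/F^n_p)$ given by $v\mapsto v\cup(-)$; this is precisely the local Torelli theorem, which guarantees that $\Phi_*$ is injective on each tangent space. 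Because $\Phi$ is holomorphic and this description is $\mathcal{O}$-linear, $\Phi_*$ is a holomorphic homomorphism of holomorphic vector bundles.

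It then remains to promote this fiberwise injective holomorphic homomorphism to a subbundle statement. Since $\mathcal{M}_m$ is connected and $\Phi_*$ is injective on every fiber, its rank is constant, equal to $N=\dim_{\mathbb{C}}\mathcal{M}_m$; a constant-rank holomorphic bundle map has image a holomorphic subbundle with locally free cokernel, and applying this to $\Phi_*$ exhibits $T\mathcal{M}_m$ as a holomorphic subbundle of $(H^n)^*\otimes H^n$. Finally, the metric induced on this subbundle is the restriction of the Hodge metric of $\End(H^n)$, and one checks that under the identification above it agrees with the metric $\Phi^*h$ used to define the Hodge metric on $\mathcal{M}_m$; this compatibility is exactly what makes Theorem~\ref{Chern forms 2} applicable to $A=T\mathcal{M}_m$. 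The step demanding the most care is the second one: namely, verifying that the differential of the period map, which a priori only takes values in the graded quotient $F^{-1}\End(H^n)/F^0\End(H^n)$, determines a genuine holomorphic subbundle of $(H^n)^*\otimes H^n$ carrying the induced Hodge metric, rather than merely a subquotient; once this identification is pinned down, the constant-rank argument and the metric comparison are routine.
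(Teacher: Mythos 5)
Your proposal follows essentially the same route as the paper: the first assertion via functoriality of polarized variations of Hodge structure under duals and tensor products, and the second by combining Griffiths transversality with the local Torelli injectivity of the period map differential, exactly as in Section 3.2, with your constant-rank argument being a routine elaboration. The subtlety you flag at the end (the differential naturally landing in the quotient $F^{-1}\End(H^n)/F^0\End(H^n)$ rather than in $\End(H^n)$ itself) is in fact glossed over by the paper as well, which simply restates transversality as $\Phi_*(v)\in\bigoplus_{k=0}^{n}\Hom(F^k_p,F^{k-1}_p)$ and immediately concludes, so your treatment is, if anything, more scrupulous than the paper's own.
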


\subsection{Chern Forms of Calabi-Yau Moduli with the Hodge Metric} \label{Hodge3}
As the Calabi-Yau moduli $\mathcal{M}_m$ is
quasi-projective, see Theorem \ref{quasiproj}, we know that
there is a compact projective manifold
$\bar{\mathcal{M}}_m$ such that $\bar{\mathcal{M}}_m- \mathcal{M}_m$ is a normal crossing divisor.
Also, the local mondromy of the variation of polarized Hodge structure
around the divisor is at least quasi-unipotent.
 Thus after passing to a finite ramified cover if necessary,
 the local monodromy becomes unipotent. Therefore, without loss of generality, we can assume
 the Hodge bundles have canonical extensions, which are vector bundles over the compactification $\bar{\mathcal{M}}_m$ of the
Calabi-Yau moduli $\mathcal{M}_m$, due to Proposition \ref{extension}.
Then, by Theorem \ref{Chern forms 1} and Theorem \ref{Chern forms 2}, we have
\begin{theorem}\label{hodgechern}
The first Chern form of the Calabi-Yau moduli $\mathcal{M}_m$ with the induced Hodge metric define currents over
 the compactification $\bar{\mathcal{M}}_m$ with normal crossing boundary divisors. Moreover,
 let $R_H$ be the curvature form of $T\mathcal{M}_m$ with the induced Hodge metric, then we have
 \begin{eqnarray*}
 \left( \frac{-1}{2 \pi i}\right)^N \int_{T\mathcal{M}_m} (tr R_H)^N= c_1(\tilde{T\mathcal{M}_m})^N
 \end{eqnarray*}
 where $N= \dim_{\mathbb{C}} \mathcal{M}_m$.
\begin{proof}
By Proposition \ref{subbundle}, with the Hodge metric, the tangent bundle $T \mathcal{M}_m$
of the Calabi-Yau moduli $\mathcal{M}_m$ is a holomorphic subbundle of the variation of polarized Hodge
structure $(H^n)^* \otimes H^n \rightarrow  \mathcal{M}_m$. So $T \mathcal{M}_m$
has the canonical extension, which give us a holomorphic vector bundle $\widetilde{T\mathcal{M}}_m\subset \widetilde{(H^n)^*\otimes H^n}$
over $\overline{\mathcal{M}}_m$.
Therefore, by Theorem \ref{Chern forms 2},
 the first Chern form of $T \mathcal{M}_m \rightarrow \mathcal{M}_m$
define currents over the compactification $\overline{\mathcal{M}}_m$ of $\mathcal{M}_m$,
which represent the first Chern class of the vector bundle $\tilde{T\mathcal{M}}_m \rightarrow \bar{\mathcal{M}}_m$ with
the induced Hodge metric.
\end{proof}
\end{theorem}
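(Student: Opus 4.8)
The plan is to deduce the statement as a direct application of Theorem \ref{Chern forms 2}, taking the ambient variation of polarized Hodge structure to be $H = (H^n)^* \otimes H^n$ over $S = \mathcal{M}_m$ and the subbundle to be $A = T\mathcal{M}_m$, with $n = N = \dim_{\mathbb{C}} \mathcal{M}_m$. The work therefore reduces to checking that all the hypotheses of Theorem \ref{Chern forms 2} are in force for this choice.

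First I would fix the geometric setup. By Theorem \ref{quasiproj}, $\mathcal{M}_m$ is a smooth quasi-projective manifold, so it admits a smooth projective compactification $\bar{\mathcal{M}}_m$ whose boundary $D_\infty = \bar{\mathcal{M}}_m - \mathcal{M}_m$ is a normal crossing divisor. The local monodromy of the variation $(H^n)^* \otimes H^n$ around each branch of $D_\infty$ is quasi-unipotent by Borel's theorem (see \cite{schmid}), so after passing to a finite ramified cover it becomes unipotent; by Proposition \ref{extension} this guarantees that the relevant canonical extensions are honest vector bundles rather than merely coherent sheaves. Since the identity to be proved is an equality of intersection numbers that scales by the covering degree on both sides, it is harmless to pass to such a cover, and we may assume the monodromies are unipotent without loss of generality.

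Next I would invoke the embedding. By Proposition \ref{subbundle}, Griffiths transversality together with the local Torelli theorem (injectivity of $\Phi_*$) exhibits $T\mathcal{M}_m$ as a holomorphic vector subbundle of the variation of polarized Hodge structure $(H^n)^* \otimes H^n$, carrying the induced Hodge metric. This is precisely the data $A \subset H$ required by Theorem \ref{Chern forms 2}. I would then form the canonical extension $\widetilde{T\mathcal{M}}_m := \widetilde{(H^n)^* \otimes H^n} \cap j_*(T\mathcal{M}_m)$, which is a holomorphic subsheaf of the canonical extension of the ambient variation and, by the reduction above, a vector bundle over $\bar{\mathcal{M}}_m$.

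With these verifications in hand, Theorem \ref{Chern forms 2} applies verbatim with $A = T\mathcal{M}_m$ and $n = N$, yielding both that the first Chern form of $T\mathcal{M}_m$ with the induced Hodge metric defines a current on $\bar{\mathcal{M}}_m$ and the stated equality
\begin{eqnarray*}
\left( \frac{-1}{2 \pi i}\right)^N \int_{\mathcal{M}_m} (tr R_H)^N = c_1(\widetilde{T\mathcal{M}}_m)^N.
\end{eqnarray*}
The only genuine content beyond bookkeeping is the subbundle property of $T\mathcal{M}_m$; this is the main obstacle, but it has already been settled in Proposition \ref{subbundle} via Griffiths transversality and local Torelli, so the remaining argument is purely a matter of matching hypotheses to Theorem \ref{Chern forms 2}.
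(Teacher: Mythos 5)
Your proposal is correct and takes essentially the same route as the paper: both reduce the statement to Theorem \ref{Chern forms 2} applied to the subbundle embedding $T\mathcal{M}_m \hookrightarrow (H^n)^*\otimes H^n$ of Proposition \ref{subbundle}, with the reduction to unipotent monodromy and the existence of the canonical extension handled exactly as in the paragraph preceding the theorem in the paper. Your extra observation that passing to a finite ramified cover is harmless because both sides of the identity scale by the covering degree is a detail the paper leaves implicit, but it does not change the argument.
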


\section{Chern Forms of Calabi-Yau Moduli with the Weil-Petersson Metric} \label{weil-petersson}

In this section, we first review the Weil-Petersson geometry of the Calabi-Yau moduli. Then,
by the standard isomorphism $T \mathcal{M}_m \cong (F^n)^* \otimes F^{n-1}/ F^n$, we get that the
Chern forms of Calabi-Yau moduli $\mathcal{M}_m$ are integrable with the Weil-Petersson metric.

\noindent For each fiber $X_s$, we assign the Calabi-Yau metric $g(s)$ in the polarization K\"ahler class. Then on the fiber $X_s=X$,
the Kodaira-Spencer theory gives rise to an isomorphism $\rho : T_sS \longrightarrow H^1(X,T^{1,0}X)\cong
\mathbb{H}^{0,1}(T^{1,0}X)$, the space of the harmonic Beltrami differentials. So for $v,w \in T_s S$, one defines the Weil-Petersson metric
on $S$ by:
\begin{equation*}
g_{WP}(v,w):=\int_X \langle\rho(v), \rho(w)\rangle_{g(s)} dvol_{g(s)}
\end{equation*}
Let $\dim X=n$. Using the fact that the global holomorphic n-form $\Omega= \Omega(s)$ is flat with respect to $g(s)$, it can be shown
that
\begin{equation}\label{WP}
g_{WP}(v,w)=-\frac{\tilde{Q}(i_v \Omega, \overline{i_w\Omega})}{\tilde{Q}(\Omega, \bar{\Omega})}.
\end{equation}
Here, for convenience, we write $\tilde{Q}(\cdot,\cdot)=(\sqrt{-1})^n Q(\cdot, \cdot)$, where $Q$ is the intersection product.
Therefore, $\tilde{Q}$ has alternating signs in the successive primitive cohomology groups $PH^{p,q} \subset H^{p,q}$ with $ p+q=n$.
In particular, $g_{WP}$ is K\"ahler and is independent of the choice of $\Omega$. In fact, $g_{WP}$ is also independent of the choice of the polarization.The reader can refer to
\cite{LS} for details of the definition.

\noindent Formula (\ref{WP}) of the Weil-Pertersson metric implies
that the natural map $H^1(X,T_X)\longrightarrow \Hom (F^n, F^{n-1}/F^n)$
via the interior product $v \longmapsto i_v \Omega$ is an isometry from the
tangent bundle $T \mathcal{M}_m$ with the Weil-Petersson metric to
the Hodge bundle $(F^n)^{*}\otimes F^{n-1}/F^n$ with the induced Hodge metric.
So the Weil-Petersson metric is precisely the metric induced from the first piece
of the Hodge metric on the horizontal tangent bundle over the period domain.

\noindent More precisely,  for the Calabi-Yau moduli $\mathcal{M}_m$, we have the following period map from the moduli space to
the period domain of Hodge structures:
\begin{equation}
\Phi: \mathcal{M}_m \longrightarrow D/\Gamma
\end{equation}

\noindent And the differential of the period map $\Phi$ gives us the infinitesimal period map at $p \in \mathcal{M}_m$:
\begin{equation*}
\Phi_*: T_p \mathcal{M}_m \longrightarrow \mbox{Hom}(F^n, F^{n-1}/F^n)\oplus \mbox{Hom}(F^{n-1}/F^n, F^{n-2}/F^{n-1})\oplus \cdots.
\end{equation*}
is an isomorphism in the first piece. By using this isomorphism and Theorem \ref{Chern forms 2}, we have the following result, which is
\cite[Theorem 6.3]{LD}. Our proof is different and much simpler.
\begin{theorem}\label{curvature3}
The Chern forms of  the Calabi-Yau moduli $\mathcal{M}_m$ with the Weil-Petersson metric
 define currents over the compactification $\bar{\mathcal{M}}_m$ of $\mathcal{M}_m$. Moreover,
  the first Chern form represent the first Chern class of the quotient bundle
 $\tilde{(F^n)^* \otimes F^{n-1}}/ \tilde{(F^n)^* \otimes F^n} \rightarrow \overline{\mathcal{M}}_m$.
\begin{proof}
Equipped with the Weil-Petersson metric, the tangent bundle $T \mathcal{M}_m$ of the Calabi-Yau moduli $\mathcal{M}_m$
is isomorphic to
\begin{eqnarray*}
(F^n)^*\otimes F^{n-1}/ F^n
 \cong (F^n)^* \otimes F^{n-1} / (F^n)^* \otimes F^n,
\end{eqnarray*}
which is a quotient of subbundles of  the variation of polarized Hodge
structure $(H^n)^* \otimes H^n \rightarrow  \mathcal{M}_m$. Here the Hodge bundles $F^k$'s are all  equipped with their natural Hodge metrics. So, by Theorem \ref{Chern forms 1},
the  Chern forms of $T\mathcal{M}_m$ define currents over the compactification $\overline{\mathcal{M}}_m$ of $\mathcal{M}_m$.
Moreover, the first Chern form of the tangent bundle $T\mathcal{M}_m$ with the Weil-Petersson metric
 represent the first Chern class of the canonical extension
\begin{eqnarray*}
\tilde{T \mathcal{M}}_m \cong \tilde{(F^n)^* \otimes F^{n-1}}/ \tilde{(F^n)^* \otimes F^n}.
\end{eqnarray*}
\end{proof}
\end{theorem}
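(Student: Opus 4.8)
The plan is to reduce the statement, via the Weil--Petersson isometry already recorded in this section, to an assertion about Hodge subquotients of the variation $(H^n)^*\otimes H^n$, and then to invoke Theorem \ref{Chern forms 1} essentially verbatim.

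First I would isolate the content of formula (\ref{WP}): the interior product $v\mapsto i_v\Omega$ identifies $T\mathcal{M}_m$, carrying the Weil--Petersson metric, with $(F^n)^*\otimes F^{n-1}/F^n$, carrying the induced Hodge metric, and (\ref{WP}) says precisely that this identification is an \emph{isometry}. Since $F^n=H^{n,0}$ is a line bundle and tensoring by the line $(F^n)^*$ is exact, I can rewrite the target as
\[
(F^n)^*\otimes F^{n-1}/F^n \;\cong\; \big[(F^n)^*\otimes F^{n-1}\big]\big/\big[(F^n)^*\otimes F^n\big] \;=\; \Hom(F^n,F^{n-1})\big/\Hom(F^n,F^n).
\]
Here $\Hom(F^n,F^n)\subset\Hom(F^n,F^{n-1})$ are holomorphic Hodge subbundles of $(H^n)^*\otimes H^n$, so the tangent bundle is realized as a Hodge subquotient of this weight-zero variation, whose monodromies around $D_\infty$ we have already arranged to be unipotent in Section \ref{Hodge3}.

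With this identification in place, I would apply Theorem \ref{Chern forms 1} to the quotient bundle: its Chern forms for the induced Hodge metric define currents on $\bar{\mathcal{M}}_m$, and its first Chern form represents the first Chern class of the canonical extension of $\Hom(F^n,F^{n-1})/\Hom(F^n,F^n)$. Because the identification above is an isometry, it preserves the norm growth near $D_\infty$ that defines Deligne's canonical extension, and hence extends to an isomorphism
\[
\widetilde{T\mathcal{M}}_m \;\cong\; \widetilde{(F^n)^*\otimes F^{n-1}}\big/\widetilde{(F^n)^*\otimes F^n};
\]
transporting the conclusion of Theorem \ref{Chern forms 1} across this isomorphism then yields exactly the assertion of the theorem.

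The step I expect to be the main obstacle is the boundary bookkeeping in this last identification: one must check that forming Deligne's canonical extension commutes with the dual, the tensor product, and the quotient used to build $(F^n)^*\otimes F^{n-1}/F^n$, so that the extension furnished abstractly by Theorem \ref{Chern forms 1} coincides with the explicit quotient $\widetilde{(F^n)^*\otimes F^{n-1}}/\widetilde{(F^n)^*\otimes F^n}$ named in the statement. This is where the unipotency reduction of Section \ref{Hodge3} and the isometry (\ref{WP}) do the real work, since under unipotent monodromy the canonical extension is compatible with these functorial operations and with the metric; by contrast, the interior-product identification over the open locus $\mathcal{M}_m$ is elementary.
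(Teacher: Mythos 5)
Your proposal is correct and follows essentially the same route as the paper's own proof: use the isometry $T\mathcal{M}_m \cong (F^n)^*\otimes F^{n-1}/F^n$ furnished by the Weil--Petersson formula, rewrite it as the quotient $\bigl[(F^n)^*\otimes F^{n-1}\bigr]/\bigl[(F^n)^*\otimes F^n\bigr]$ of Hodge subbundles of $(H^n)^*\otimes H^n$, and apply Theorem \ref{Chern forms 1} under the unipotency reduction of Section \ref{Hodge3}. Your closing remarks on the compatibility of Deligne's canonical extension with dual, tensor, and quotient are a point the paper passes over silently, but they refine rather than alter the argument.
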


\noindent As a corollary, we have the following result on the Chern numbers,

\begin{corollary}
Let $f$ be an invariant polynomial on $\Hom(T\mathcal{M}_m, T\mathcal{M}_m)$ and $R_{WP}$
represent the curvature form of the Weil-Petersson metric on the Calabi-Yau moduli
 $\mathcal{M}_m$. Then we have
\begin{equation}\label{chernmoduli1}
\int_{\mathcal{M}_m} tr(f(R_{WP})) < \infty.
\end{equation}
\end{corollary}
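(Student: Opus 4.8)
The plan is to reduce the statement to the integrability of characteristic forms of Hodge bundles, using the isometry recorded in Theorem \ref{curvature3}. By that theorem the Weil--Petersson metric on $T\mathcal{M}_m$ is identified, through the interior-product isomorphism $v\mapsto i_v\Omega$, with the Hodge metric induced on the Hodge subquotient bundle $(F^n)^*\otimes F^{n-1}/(F^n)^*\otimes F^n$ of the variation of polarized Hodge structure $(H^n)^*\otimes H^n\to\mathcal{M}_m$. In particular this subquotient is a Hodge bundle of the form $F^p/F^q$ for that variation, so the Weil--Petersson curvature $R_{WP}$ is exactly the curvature $R_H$ of a Hodge bundle with its induced Hodge metric, and Theorem \ref{Chern forms 1} applies to it verbatim. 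The first step is to record this identification and to note that for any $\mathrm{Ad}$-invariant polynomial $f$ the form $\tr(f(R_{WP}))$ is a universal polynomial combination of the Chern--Weil forms $c_k(R_H)$ of this Hodge bundle.

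Second, I would pass to the top-degree component, since it is the $(N,N)$-part of $\tr(f(R_{WP}))$ that is integrated, where $N=\dim_{\mathbb{C}}\mathcal{M}_m$. This component is a finite sum of wedge products $c_{i_1}(R_H)\wedge\cdots\wedge c_{i_r}(R_H)$ with $i_1+\cdots+i_r=N$. By Theorem \ref{Chern forms 1} each factor $c_{i_j}(R_H)$ already defines a current on the compactification $\bar{\mathcal{M}}_m$, hence has locally integrable coefficients near the normal crossing divisor $\bar{\mathcal{M}}_m\setminus\mathcal{M}_m$; the remaining issue is to control their product. To do so I would pin down the boundary growth using the norm estimates of \cite{cks} quoted in Section \ref{cs2}. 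Covering $\bar{\mathcal{M}}_m\setminus\mathcal{M}_m$ by finitely many special coordinate polydisks $\mathcal{U}\cong\Delta^k$ with $\mathcal{U}\cap\mathcal{M}_m\cong(\Delta^*)^l\times\Delta^{k-l}$, these estimates force $R_H$ to have at worst Poincar\'e growth in each boundary direction $s_i$, so that each $c_{i_j}(R_H)$ is dominated, up to bounded powers of $\log|s_i|$, by the corresponding Poincar\'e forms. Consequently any top-degree product is dominated by the Poincar\'e volume form $\prod_{i=1}^{l}\frac{\sqrt{-1}\,ds_i\wedge d\bar s_i}{|s_i|^2(\log|s_i|)^2}$ times bounded $\log$-factors, which is integrable on $(\Delta^*)^l\times\Delta^{k-l}$. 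Summing the finitely many local integrals together with the integral over a compact interior region, which is manifestly finite, gives $\int_{\mathcal{M}_m}\tr(f(R_{WP}))<\infty$.

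The step I expect to be the main obstacle is bridging the gap between the literally quoted Theorems \ref{Chern forms 1} and \ref{Chern forms 2}, which guarantee the current/integrability property for the individual Chern forms (and for $(\tr R_H)^N$), and the present statement, which involves an arbitrary invariant polynomial and hence products of distinct Chern forms. Since a product of merely locally integrable forms need not be integrable, one cannot simply multiply the individual statements; the finiteness genuinely relies on the Poincar\'e growth of $R_H$ furnished by the full norm estimates of \cite{cks}. Making the Poincar\'e bound uniform over the finitely many orderings of the monodromy logarithms $N_i$ and patching the local $L^1$ bounds into a single global one is the part that needs care, though it is routine once the growth estimate is available.
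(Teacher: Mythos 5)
Your proposal is correct and takes essentially the same route as the paper: the paper's entire proof is the single sentence that the corollary follows directly from Theorem \ref{curvature3}, whose content is precisely your reduction of $R_{WP}$ to the curvature of the Hodge subquotient $(F^n)^*\otimes F^{n-1}/(F^n)^*\otimes F^n$ with its induced Hodge metric, followed by the integrability results quoted from \cite{cks} and \cite{kollar}. The only difference is that you additionally justify why products of distinct Chern forms remain integrable, via the Poincar\'e-growth norm estimates of \cite{cks} --- a step the paper treats as immediate --- which is a legitimate filling-in of detail rather than a different approach.
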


\begin{proof}
The proof follows directly from Theorem \ref{curvature3}.
\end{proof}

\noindent As pointed out in the introduction, it follows from Theorem \ref{Chern forms 1} easily that the first Chern form of all of the Hodge bundles with Hodge metrics also represent the Chern classes of their canonical extensions. Finally note that the K\"ahler form of the Weil-Petersson metric is equal to the first Chern form of the Hodge bundle $F^n$ with its Hodge metric,  $$\omega_{WP} = c_1(F^n)_H,$$ so we easily deduce that the Weil-Petersson volume is finite and is a rational number, as proved in \cite{LS} and \cite{To} by computations.

\end{document}